\newcommand{\CC}{{\mathbb C}}
\newcommand{\RR}{{\mathbb R}}
\newcommand{\defeq}{\stackrel{\rm{def}}{=}}
\newcommand{\rest}{|}
\renewcommand{\Re}{\mathop{\rm Re}\nolimits}
\renewcommand{\Im}{\mathop{\rm Im}\nolimits}
\newcommand{\wV}{{\widetilde V}}
\newcommand{\wM}{{\widetilde M}}
\newcommand{\wo}{{\widetilde \omega}}
\theoremstyle{plain}
\newtheorem{prop}{Proposition}
\theoremstyle{definition}
\numberwithin{equation}{section}
\def\squarebox#1{\hbox to #1{\hfill\vbox to #1{\vfill}}} 
\newcommand{\sech}{\textnormal{sech}}
\title
[Geometric structure of NLS evolution]
{Geometric structure of NLS evolution}
\author[J. Holmer]
{Justin Holmer}
\email{holmer@math.brown.edu}
\address{Department of Mathematics, Brown University\\
151 Thayer Street, Providence, RI 02912, USA}
\author[M. Zworski]
{Maciej Zworski}
\email{zworski@math.berkeley.edu}
\address{Mathematics Department, University of California \\
Evans Hall, Berkeley, CA 94720, USA}
\begin{document}    
   
\maketitle

\section{Introduction}
\label{int}

The purpose of this note is to clarify the relation
between the Hamiltonian and Lagrangian approaches to 
nonlinear evolution equations. In particular we explain 
the least action principle and the Noether theorem in this 
context.
The specific 
point of view,
adapted from Souriau's book \cite{Sou}, has perhaps not been 
presented for nonlinear evolution equations. It seems that
in the mathematics literature, as in \cite{FrSi},\cite{GSS},\cite{HZ1}, and in 
references
given there, the Hamiltonian point of view is prevalent.
In the physics  literature, 
see for instance \cite{GHW},\cite{OS}, the Lagrangian point of view rules
with the symplectic structure largely neglected. In \S \ref{ed} we 
present one possible mathematical reason for that.

\section{The Hamiltonian structure}
\label{rhs}

In this section we recall well known facts about the
Hamiltonian structure of the nonlinear Schr\"odinger equation.
The same point of view applies to other evolution equations, see
for instance \cite{GSS} and references given there. 

For simplicity we will consider the case of dimension one, and 
\[ V \defeq H^1(\mathbb{R}, \mathbb{C})\subset L^2(\mathbb{R},\mathbb{C})\,,\]
viewed as a {\em real} Hilbert space.
The inner product and the symplectic form are given by 
\begin{equation}
\label{eq:omega}\
\langle u,v \rangle  \defeq \Re \int u\bar v \,, \  \ 
\omega(u,v) \defeq \langle  u , i v \rangle = \Im \int u\bar v\,, 
\end{equation}
Let 
$H:V\to \mathbb{R}$ be a function, a Hamiltonian.  
The associated Hamiltonian
vector field is a map $\Xi_H : V\to TV$, 
which means that for a particular point $u\in V$, we have 
$(\Xi_H)_u \in T_uV$. The vector field  $\Xi_H$ is defined by the relation
\begin{equation}
\label{eq:Hamvf} \omega(v , (\Xi_H)_u) = d_uH(v)
\,, \end{equation}
where $v\in T_uV$, and $d_uH:T_uV \to \mathbb{R}$ is defined by
$$d_uH(v) = \frac{d}{ds}\Big|_{s=0} H(u+sv) \,. $$
In the notation above
\begin{equation}
\label{eq:nats}  dH_u ( v ) = \langle dH_u , v \rangle \,, 
\ \  (\Xi_H)_u = \frac 1 i dH_u  \,. 
\end{equation}

If we take $ V = H^1 ( \RR , \CC ) $ with the symplectic form \eqref{eq:omega},
and
$$H(u) = \int \frac14|\partial_x u|^2 - \frac1 {p+1} |u|^{p+1} $$
then we can compute
\begin{align*}
d_uH(v) &= \Re \int ( (1/2) \partial_x u \partial_x \bar v - |u|^{p-1} u \bar v)\\
&= \Re \int ( - (1/2)\partial_x^2 u - |u|^{p-1} u)\bar v \,. 
\end{align*}
Thus, in view of \eqref{eq:nats} and \eqref{eq:Hamvf}, 
$$(\Xi_H)_u = \frac 1 i \left( - \frac12 \partial_x^2u - |u|^{p-1}u \right) $$
The flow associated to this vector field (Hamiltonian flow) is
\begin{equation}
\label{eq:Hflow}
\dot u = (\Xi_H)_u =  \frac 1 i  \left( - 
\frac12 \partial_x^2u - |u|^{p-1}u \right) 
\,.
\end{equation}

\section{The Lagrangian point of view and the Noether theorem}
\label{lpv}

According to \cite{Sou} the following point of view towards
dynamics goes back to Lagrange. We consider 
\[  \wV = V \times \RR = H^1 ( \RR , \CC ) \times \RR \,,\]
and the following one form on $ \wV $ (we are rather informal
here about dual spaces etc):
\begin{equation}
\label{eq:defal}
 \alpha_{(u,t)} ( v , T ) \defeq \frac 12 \omega ( u , v ) -  H ( u ) T \,, \ \ 
( v , T ) \in T_{ ( u , t ) } \wV \,, \ \ ( u , t ) \in \wV \,. 
\end{equation}

\medskip

\noindent
{\bf Remark.} The presence of the factor $ 1/2 $ in front of $ \omega $
in the definition of $ \alpha $ is best understood using the
finite dimensional analogy: if $ z = x + i \xi $, $ x, \xi \in \RR^n $, then
\begin{equation}
\label{eq:alpha}   \alpha = \frac{1}2 \Im z d\bar z   - H( x, \xi ) dt  
= \frac 12 ( \xi dx - x d \xi ) - H ( x, \xi ) dt \,.
\end{equation}

\medskip

We then define the differential of $ \alpha $:
\[ \wo \defeq d \alpha \,, \]
that is
\[ \wo_{(u,t)} ( ( v_1 , T_1 ) , (v_2 , T_2 ) ) = \omega ( v_1 , v_2 ) - 
dH_{u} ( v_1 ) T_2 + dH_u ( v_2 ) T_2 \,, \]
where we used the notation of \S \ref{rhs}. This calculation is
easily understood using the analogy with \eqref{eq:alpha}:
\[ d ( \xi dx - x d \xi )/2 = d \xi \wedge d x \,. \]

Having $ \wo $ makes
$ \wV $ a presymplectic space in the sense that $ \wo $ has
a kernel of dimension one. Here, the kernel is
\[ \ker \wo_{(u,t)} \defeq \left\{ ( v, T )  \in T_{ ( u , t) } \wV \; ; \; \
\forall \; (v', T') \in T_{ ( u , t ) } \wV \,, \
\wo_{(u,t)} ( ( v, T ) , ( v' , T') ) = 0 \right\} \,. \]

The following proposition replaces \eqref{eq:Hflow} with a condition
related to $ \wo = d \alpha $:

\begin{prop}
\label{p:HL}
The curve $ t \mapsto u ( t ) \in V $ is a solution to 
\begin{equation}
\label{eq:NLS}
  i u_t = - \frac 12 \partial_x^2 u - |u|^{p-1} u \,, 
\end{equation}
if and only if 
\begin{equation}
\label{eq:HL}
(\dot u ( t ) , 1 ) \in \ker \wo_{ u ( t ) } \,. 
\end{equation}
In other words,
\[ \ker \wo_{u} = \RR ( \Xi_{ u } , 1 ) \,. \]
\end{prop}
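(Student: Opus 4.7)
The plan is to compute $\ker \widetilde\omega_{(u,t)}$ directly from the explicit formula for $\widetilde\omega$, and then read off the equivalence with \eqref{eq:NLS} using the description of $\Xi_H$ from \S\ref{rhs}. Concretely, for a tangent vector $(v,T) \in T_{(u,t)}\widetilde V$ to lie in the kernel we need
\[ \omega(v,v') - dH_u(v)\,T' + dH_u(v')\,T = 0 \quad \text{for all } (v',T') \in T_{(u,t)}\widetilde V. \]
I will split this test into two families of test vectors, $(v',0)$ and $(0,T')$.

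Testing with $T' = 0$ and arbitrary $v'$ yields $\omega(v,v') + T\,dH_u(v') = 0$. By the defining relation \eqref{eq:Hamvf}, $dH_u(v') = \omega(v',\Xi_H)$, so using antisymmetry of $\omega$ this becomes $\omega(v - T\Xi_H, v') = 0$ for every $v' \in V$. Since $\omega(u,v) = \Im \int u\bar v$ is (weakly) non-degenerate on $V$, this forces $v = T\,\Xi_H$. Testing with $v' = 0$ and arbitrary $T'$ yields the single scalar condition $dH_u(v) = 0$; but with $v = T\,\Xi_H$ this is automatic, since $dH_u(\Xi_H) = \omega(\Xi_H,\Xi_H) = 0$ by antisymmetry (conservation of energy along the Hamiltonian flow). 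Therefore
\[ \ker \widetilde\omega_{(u,t)} = \{(T\,\Xi_H,\, T) : T \in \RR\} = \RR\,(\Xi_H, 1), \]
which is the second statement of the proposition.

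The first statement then follows immediately: $(\dot u(t),1) \in \ker \widetilde\omega_{u(t)}$ if and only if $(\dot u(t),1) = 1\cdot (\Xi_H,1)$ (the second coordinate pins down the scalar), i.e.\ $\dot u(t) = (\Xi_H)_{u(t)}$. By \eqref{eq:Hflow} this is precisely \eqref{eq:NLS}.

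The only delicate point is the non-degeneracy step, since we are working on the infinite-dimensional space $V = H^1(\RR,\CC)$ and $\omega$ is only weakly non-degenerate; the cleanest way to deal with this is to note that $\omega(w,v') = 0$ for all $v' \in V$ already forces $w = 0$ because $V$ is dense in $L^2$ and $\omega$ comes from the $L^2$ pairing. Everything else is a straightforward bookkeeping of the formula for $d\alpha$, parallel to the finite-dimensional model \eqref{eq:alpha} flagged in the remark.
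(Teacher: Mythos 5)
Your proof is correct, and it is in fact more complete than the one in the paper. The paper's proof only verifies the inclusion $\RR\,((\Xi_H)_u,1) \subset \ker \wo_{(u,t)}$, by checking that $\wo\bigl(((\Xi_H)_u,1),(v,T)\bigr)=0$ for all $(v,T)$ (using $\omega(\Xi_H,v)=-dH_u(v)$ and $dH_u(\Xi_H)=0$), and leaves the reverse inclusion --- which is exactly what the ``only if'' direction of the proposition needs --- implicit. You instead characterize the kernel directly by splitting the test vectors into $(v',0)$ and $(0,T')$, and the first family is where the real content lies: it reduces to $\omega(v-T\,\Xi_H,v')=0$ for all $v'$, and the weak non-degeneracy of $\omega$ then pins down $v=T\,\Xi_H$. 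That non-degeneracy step is precisely the ingredient the paper does not spell out, and your remark about how to justify it (the form comes from the $L^2$ pairing, with the usual informality about $\Xi_H$ landing in $H^{-1}$ rather than $V$ that the paper itself acknowledges) is at the right level of rigor. One small bookkeeping point: the paper's displayed formula for $\wo$ contains a typo (the last term should read $dH_u(v_2)\,T_1$, not $T_2$, as antisymmetry and the finite-dimensional model $d\xi\wedge dx - dH\wedge dt$ require); you have silently used the corrected, antisymmetric version, which is the right thing to do.
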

\begin{proof}
We already know that \eqref{eq:NLS} is equivalent to \eqref{eq:Hflow}.
We then check that 
\[ \begin{split}
\wo ( ( (\Xi_H)_u , 1 ) , ( v , T) ) & = 
\omega ( (\Xi_H)_u , v  ) - \langle dH_u , T (\Xi_H)_u  - v  \rangle
\\
&  = - \langle dH_u , v  ) -  \langle dH_u ,  T (\Xi_H)_u  - v  \rangle\\
& = T   \langle dH_u , (\Xi_H)_u \rangle 
 = T  \langle 
dH_u , (1/i ) dH_u \rangle = 0 \,.
\end{split} \]
\end{proof}

A special case of Noether's Theorem (see \cite[(11.12)]{Sou} for a
more general version using the moment map) is now nicely given using
this point of view:

\begin{prop}
\label{p:Noeth} 
Suppose that 
\[ A ( s) \; : \; 
( s , U ) \mapsto U ( s )  \,,  \ \ s \in \RR \,, \ \ U \in \wV \,, \]
is a
one parameter group acting on $ \wV $ and preserving $ \alpha $:
\begin{equation}
\label{eq:Aal}
  A(s)^* \alpha = \alpha \,, \end{equation}
(here the pullback is given by $ f^* \alpha_{(u,t)}  ( v, T ) 
\defeq \alpha_{ f ( u , t ) } 
(f_* ( v , T ) ) $).
Then 
\begin{equation}
\label{eq:Noeth} F ( u, t ) \defeq \alpha_{(u,t)} \left ( \frac{d}{ds} A ( s ) (u,t) |_{s=0} \right) \,, \ 
(u,t)  \in \wV \end{equation}
is conserved by the flow \eqref{eq:NLS}.
\end{prop}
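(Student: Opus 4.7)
The plan is to interpret $F$ as the interior product $\iota_\xi \alpha$, where $\xi$ is the infinitesimal generator of the one-parameter group $A(s)$, namely $\xi_{(u,t)} \defeq \frac{d}{ds} A(s)(u,t)|_{s=0}$. Differentiating the invariance \eqref{eq:Aal} in $s$ at $s = 0$ produces the Lie derivative statement $\mathcal L_\xi \alpha = 0$.

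The computational engine I would then invoke is Cartan's magic formula,
\[ \mathcal L_\xi \alpha = \iota_\xi \, d\alpha + d ( \iota_\xi \alpha ) = \iota_\xi \wo + dF \,, \]
which combined with $\mathcal L_\xi \alpha = 0$ gives the key identity $ dF = -\iota_\xi \wo $ on $\wV$.

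To conclude, let $u(t)$ solve \eqref{eq:NLS}. By \propref{p:HL}, the tangent vector $\eta(t) \defeq (\dot u(t),1)$ to the curve $t \mapsto (u(t),t)$ lies in $\ker \wo_{u(t)}$, hence
\[ \frac{d}{dt} F ( u(t), t ) = dF_{(u(t),t)} ( \eta(t) ) = - \wo_{(u(t),t)} ( \xi, \eta ) = \wo_{(u(t),t)} ( \eta, \xi ) = 0 \,, \]
so $F(u(t),t)$ is constant along the NLS flow.

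The main technical point, in keeping with the paper's informal stance toward the infinite-dimensional geometry of $\wV = H^1(\RR,\CC)\times\RR$, is to justify Cartan's identity pointwise in this setting: one needs the action $A(s)$ to be smooth enough to have a well-defined generator $\xi$, and enough regularity for $\iota_\xi d\alpha$ and $d(\iota_\xi \alpha)$ to make sense on tangent vectors in $H^1$. For the usual NLS symmetries, such as phase rotation, spatial translation, and the Galilean boost, this is routine, and the formula \eqref{eq:Noeth} recovers the standard conserved charges.
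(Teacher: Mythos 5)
Your argument is correct and is essentially the paper's own proof: both differentiate the invariance $A(s)^*\alpha = \alpha$ at $s=0$, apply Cartan's formula to obtain $dF = -\wo(\xi,\bullet)$, and then conclude by evaluating on $(\dot u, 1) \in \ker\wo$ via Proposition~\ref{p:HL}. The only difference is notational (explicit use of $\iota_\xi$ and $\mathcal L_\xi$), plus your added remark on the infinite-dimensional justification, which matches the paper's own caveat that the finite-dimensional argument applies formally and can be verified directly.
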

\begin{proof}
In the finite dimensional case we use Cartan's formula: if $ (d/ds) f_s|_{s=0} 
= X  $ (here $ f_s : \wV \rightarrow \wV $), then at $ s = 0$,
\[ \frac{d}{ds} f_s^* \alpha = d ( \alpha ( X ) ) + ( d \alpha ) ( X , \bullet) 
\,. \]
If we take $ f_s = A( s) $ then the left hand side is $ 0 $ and 
$ X = (d/ds) A ( s ) ( u , t ) |_{s=0} $. The invariance of $ F $ is
then equivalent to 
\[  d ( \alpha ( X ) ) ( \dot u , 1 ) = 0 \,\]
but since 
\[  d ( \alpha ( X ) )  = - \wo ( X , \bullet)  \,,\]
this follows from Proposition \ref{p:HL}. The same argument applies 
formally in the case of evolution equations and can be easily verified.
\end{proof}

\subsection{Standard group actions.}
\label{sga} 
The basic group action to consider are
\[ ( u , t ) \mapsto ( e^{ -is } u , t )  \,, \ \ 
( u , t ) \mapsto ( u ( \bullet - s ) , t ) \,, \ \ 
( u , t ) \mapsto ( u , t - s ) \,, \]
and in each case we quickly see that $ A ( s ) ^* \alpha = \alpha $.
In the three cases we have 
\begin{gather*}  ( d/ds ) A( s) ( u , t) |_{s=0} = (- i u , 0 ) \,, \ \
 ( d/ds ) A( s) ( u , t) |_{s=0} = ( -u_x  , 0 ) \,, \\
 ( d/ds ) A( s) ( u , t) |_{s=0} = ( 0  , -1 ) \,, \ \
\end{gather*}
respectively, and the conserved quantities obtained using 
the formula \eqref{eq:Noeth} are easily seen to be 
\[ \int |u|^2 dx \,, \ \ \Im \int u_x \bar u dx \,, \ \  H ( u ) \,. \]

A more interesting example is given by considering the Galilean 
invariance:
\[  A( s ) ( u , t ) = ( A_0 ( s , t ) u , t ) \, , \ \ 
A_0 ( s, t ) u \defeq e^{- i t s^2/2 + i \bullet s } u ( \bullet - s t ) \,. \]
We first check that \eqref{eq:Aal} holds. In fact, 
\[  [A ( s )_*]_{(u,t)} ( v, T ) = ( A_0 ( s, t ) v + \partial_t ( 
A_0 ( s , t) u ) T , T ) = ( A_0 ( s, t ) ( v - (i s^2 u /2 + 
s u_x ) T), T ) \,, \]
and hence
\[ \begin{split} 
  (A( s )^* \alpha)_{( u , t )} ( v ,  T ) & = 
\alpha_{ A ( s ) ( u ,t ) } ( [A ( s )_*]_{(u,t)} ( v, T ) ) \\
& = \omega ( A_0 ( s , t ) u , 
A_0 ( s, t ) ( v - ( i s^2u/2 + 
s u_x ) T ) ) - 2 H ( A_0 ( s , t ) u ) T \\
& = 
\alpha_{( u , t ) } ( v , T )\,, 
\end{split} \]
since
\[ \omega  ( A_0 ( s , t ) u , 
A_0 ( s, t )  v) = \omega ( u , v ) \,, \]
and
\[ \begin{split}
H ( A_0 ( s, t ) u ) & = H ( u ) + s^2 \langle u , u \rangle/4 
+ s \langle i u , u_x \rangle/2  \\
& = H ( u ) + s^2 \omega ( i u , u )/4 
+ s \omega ( u , u_x )/2 \,. 
\end{split}
\]

We also see that
\[ \frac{d}{ds} A ( s ) ( u , t ) |_{ s= 0 } = 
( i x u - t u_x , 0 ) \,, \]
formula \eqref{eq:Noeth} gives
\[  F( u , t ) = t \Im \int u_x \bar u dx - \int x |u|^2 dx = 
F ( u , 0 ) = - \int x |u|^2 dx = 0 \,, \]
which of course corresponds to $ p = m q / t $ where
\[ p = \Im \int u_x \bar u dx \,, \ \ q = \frac1m \int x |u|^2 dx \,, \ \
m = \int |u|^2 dx \,, \]
are the momentum, position, and mass, respectively.

\subsection{Scaling}
\label{sc}

Let us now consider another group action preserving 
solutions of \eqref{eq:NLS} ($p > 1$):
\begin{equation}
\label{eq:sc}  
( s , u , t ) \longmapsto ( A_0 ( s ) u , 
s^{-2} t ) \,, \ \ 
A_0 ( s ) u ( \bullet ) \defeq s^{\frac{2}{p-1}} u (  s \bullet )  \,. 
\end{equation}
Then 
\[ [A ( s )_*]_{(u,t)} ( v , T ) = A(s) ( v, T ) \,, \]
and 
\[ (A( s)^* \alpha)_{(u,t)} ( v ,  T) = 
\tfrac12\omega(A_0 (s )u , A_0 ( s )v ) - H ( A_0 ( s ) u ) s^{-2} T =
s^{\frac{5-p}{p-1}} ( \tfrac12\omega (u , v ) -  H ( u ) T ) \,.\]
That means that the form is preserved for $ p = 5 $. For $ p \neq 5 $
we still preserve the kernel of $ \wo = d \alpha $ which is consistent
with \eqref{eq:sc} preserving the solutions. 

To see the invariant quantity given by Noether's theorem (formula 
\eqref{eq:Noeth}) for $ p = 5 $ we compute
\[ \frac{d}{ds} A ( s ) ( u , t ) |_{s=1} =  
( u/2 + xu_x , - 2t ) \,, \]
and the conserved quantity is 
\begin{equation}
\label{eq:cons}
 F ( u , t ) = \tfrac12\omega ( u , u/2 + x u_x ) + 2 t H( u ) = 
- \tfrac12\Im \int x u_x \bar u + 2 t H ( u ) \,, 
\end{equation}
which is a version of the virial identity, typically written
\begin{equation}
\label{E:virial1}
\partial_t \left( \Im \int xu_x \bar u \right) = 4H(u) \,.
\end{equation}

\subsection{Case of $ p=5 $}
\label{p5}

Here the scaling symmetry is part of a more general
scaling property:
\[ u ( t , x ) \longmapsto ( ct + d )^{-1/2} e^{\frac{ i c x^2}{ 2 ( ct + d)}}
u \left( \frac{ at+b} { ct+d} , \frac x{ct+d} \right) \,, \ \ 
\left( \begin{array}{ll} a & b \\ c & d \end{array} \right) \in 
SL_2 ( \RR ) \,, \]
see \cite{KS} for this and a recent study of the quintic NLS.

Motivated by this,
for $ g \in SL_2 ( \RR ) $ we define the standard action on $ \overline \RR $:
\[  g ( t ) = \frac{ a t + b }{ ct + d } \,, \ \ 
\left( \begin{array}{ll} a & b \\ c & d \end{array} \right) \in 
SL_2 ( \RR )  \,.\]
Then 
\[ A ( g ) \; : \; \wV \rightarrow \wV \,, \]
is given as follows
\[ A ( g ) ( u , t ) = ( A_0 ( g ) u  ,  g^{-1}(t ) ) \,, \ \ 
A_0 ( g ) u = (g'(t))^{-\frac14} e^{ -i g''(t) x^2/(4 g'(t)) } 
u \left( ( g'(t))^{\frac 12}  x  \right) \,. \]
Since $ g'( t) = ( ct + d )^{-2} $, $ (g'(t))^{\frac 12} = 
( c t + d )^{-1} $ is well defined.

The cases of
\[ \left( \begin{array}{ll} a &  \; 0\\ 0 & 1/a \end{array} \right)  \,, \ \
\left( \begin{array}{ll} 1 & b \\ 0 & 1 \end{array} \right) \,, \]
correspond to scaling and translation with the invariant quantities already 
discussed. 

For 
\[ g ( s ) \defeq \left( \begin{array}{ll} \cos s 
 & - \sin s  \\ \sin s & \ \cos s \end{array} \right) \,, \]
we obtain
\[ \frac{d}{ds} ( A ( g ( s )) ( u, t ) |_{s=0} = 
( (-t/2 + i x^2/2 ) u - t x u_x , 1 + t^2 ) \,, \]
so that the conserved quantity is 
\[ F( u , t) = - \frac14 \int x^2 |u|^2 dx + \frac12t \Im \int x u_x \bar u 
- H(u)t^2-H(u) \,.  \]
Since $ H( u ) $ is conserved and we also have \eqref{eq:cons}
we conclude that
\begin{equation}
\label{E:virial2}
\int x^2 |u ( x , t ) |^2 dx = \int x^2 |u ( x , 0 ) |^2 dx 
+ 2t \Im \int x u_x \bar u - 4H(u)t^2  \, 
\end{equation}
which is again a version of the virial identity.  This version of the virial identity is usually written
$$\partial_t^2 \int x^2 |u(x,t)|^2 \, dx = 8H(u)$$
Two time integrations, then substituting the identity
$$\partial_t \int x^2 |u|^2 \, dx = 2\Im \int x \bar u \partial_x u \, dx$$ 
evaluated at $t=0$, give
$$\int x^2|u(t)|^2 \,dx = \int x^2|u|^2 \,dx \Big|_{t=0}+ 2\Im \int x \bar u \partial_x u \, dx \Big|_{t=0} t + 4H(u)t^2$$
Integrating \eqref{E:virial1} from $0$ to $t$ gives an expression for $\Im \int x \bar u \partial_x u \, dx \Big|_{t=0}$, which substituted here gives \eqref{E:virial2}.

\section{The least action principle}
\label{lap}

To formulate the least action principle we need to define the 
Lagrangian. In the last section, although we took the 
Lagrangian point of view, we used the form $ \alpha $ given by 
\eqref{eq:defal}. The Lagrangian,
\[  {\mathcal L} \; : \; T \wV \; \longrightarrow \; \RR \,, \]
is defined as follows:
\[ {\mathcal L} ( u , t , X , T ) \defeq \alpha_{(u,t)} ( X , T ) \,, \ \
X \in T_u V \,. \]
If $ t \mapsto u $ is a curve in $ V$ we use a simplified notation

\begin{equation}
\label{eq:defla0} 
 {\mathcal L} ( u ) \defeq \alpha_{(u,t)} ( \dot u ,  1) \,.
\end{equation}

For the equation \eqref{eq:NLS} we obtain 
\begin{equation}
\label{eq:defla} 
 {\mathcal L} ( u ) = \frac12 \omega ( u, \dot u ) - H( u ) =
- \frac 12 \Im \int u_t \bar u - \frac 14 \int |u_x|^2 + \frac1{p+1} 
\int |u|^{p+1} \,. 
\end{equation}

Action is more natural than considering Lagrangian. Let $ \gamma $
be a curve in $ \wV $. Then the action on $ \gamma $ is defined as
\begin{equation}
\label{eq:defac} {\mathcal S}_\gamma \defeq \int_\gamma \alpha \,. 
\end{equation}
When the curve is given by $ t \mapsto ( u ( t) , t ) $ we 
get, in the notation of \eqref{eq:defla0},
\[  {\mathcal S }_\gamma  \defeq \int {\mathcal L} ( u ) dt \,. \]

The least action principle can be formulated as follows:

\begin{prop}
The curve $ \gamma: s \mapsto ( u ( s ) , t ( s) ) $ is 
critical for $ {\mathcal S}_\gamma $ if and only if 
$ \dot \gamma (s) \in \ker \wo_{\gamma (s)} $. In other words
\begin{equation}
\label{eq:delta} \delta {\mathcal S}_\gamma = 0 \; 
\Longleftrightarrow \;  \dot \gamma (s) \in \ker \wo_{\gamma (s)} \,. 
\end{equation}
\end{prop}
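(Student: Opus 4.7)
The plan is to compute the first variation of $\mathcal{S}_\gamma$ under an endpoint-fixing deformation, use Cartan's magic formula to express the derivative purely in terms of $\wo = d\alpha$, and then invoke the fundamental lemma of the calculus of variations to pass from an integral identity to the pointwise kernel condition.

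Concretely, I would choose a smooth one-parameter family $\Gamma : [s_0, s_1] \times (-\varepsilon_0, \varepsilon_0) \to \wV$ with $\Gamma(s,0) = \gamma(s)$ and variational vector field
$$V(s) \defeq \frac{\partial}{\partial \varepsilon}\Gamma(s,0) \in T_{\gamma(s)}\wV$$
vanishing at $s = s_0, s_1$. Writing $\mathcal{S}_{\gamma_\varepsilon} = \int_{s_0}^{s_1} \alpha_{\Gamma(s,\varepsilon)}(\partial_s \Gamma(s,\varepsilon))\, ds$ and differentiating in $\varepsilon$ at $\varepsilon = 0$, the key ingredient is the two-parameter Cartan identity
$$\frac{\partial}{\partial \varepsilon} \alpha(\partial_s \Gamma) - \frac{\partial}{\partial s}\alpha(\partial_\varepsilon \Gamma) = d\alpha(\partial_\varepsilon \Gamma, \partial_s \Gamma),$$
which is just $\mathcal{L}_V \alpha = d(\iota_V \alpha) + \iota_V d\alpha$ pulled back to the surface $\Gamma$. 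Integrating this identity from $s_0$ to $s_1$ and using $V(s_0) = V(s_1) = 0$ to kill the boundary term produces
$$\delta \mathcal{S}_\gamma = \int_{s_0}^{s_1} \wo_{\gamma(s)}\bigl(V(s), \dot\gamma(s)\bigr)\, ds.$$

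The ``$\Leftarrow$'' direction is then immediate: if $\dot\gamma(s) \in \ker \wo_{\gamma(s)}$ for every $s$, the integrand vanishes identically, hence $\delta \mathcal{S}_\gamma = 0$ for every admissible variation. For ``$\Rightarrow$'', vanishing of $\delta \mathcal{S}_\gamma$ for all $V$ forces $\wo_{\gamma(s)}(V(s), \dot\gamma(s)) = 0$ pointwise in $s$ by the fundamental lemma, and since $V(s)$ can be prescribed to take any value in $T_{\gamma(s)}\wV$ on the interior of $[s_0, s_1]$, we conclude $\dot\gamma(s) \in \ker \wo_{\gamma(s)}$ for every $s$, as claimed.

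The main subtlety, just as in the proof of \propref{p:HL}, is bookkeeping in the infinite-dimensional setting: the variations $V(s)$ are required to lie in $T_{\gamma(s)}\wV = H^1(\RR,\CC) \times \RR$, and one must check that this class of test vectors is rich enough for the fundamental lemma to yield the pointwise kernel condition. This is essentially automatic because $H^1(\RR,\CC) \times \RR$ is dense in itself and the pairing $\wo$ extends continuously, but it is the one place where the formal finite-dimensional argument needs a brief justification; beyond that, the proof is a direct transcription of the classical calculation.
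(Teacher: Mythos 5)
Your proof is correct and follows essentially the same route as the paper: both derive the first variation formula $\delta\mathcal{S}_\gamma = \int \wo_{\gamma(s)}(V(s),\dot\gamma(s))\,ds$ via Cartan's formula with the boundary term killed by the fixed-endpoint condition, and then conclude by the fundamental lemma of the calculus of variations. The only cosmetic difference is that you package the Cartan step as the two-parameter identity on the variation surface, whereas the paper phrases it via a family of diffeomorphisms $F_r$ and then separately verifies the infinite-dimensional case by a direct integration-by-parts computation of $\delta\mathcal{S}$ -- the substance is identical.
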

\begin{proof}
We first give the proof in finite dimensions.
Let $ \gamma_r $ be a smooth family of curves such that $ \gamma_0 = 0 $, 
and $ \gamma_r $ is equal to $ \gamma $ outside of a compact subset, 
disjoint from $ \partial \gamma $. 
Being stationary means that for any such family,
\[ \frac { d} { d r} \int_{\gamma_r } \alpha \; \; \Big|_{r=0} = 0 \,.\]
Let $ F_r $ be a smooth family of diffeomorphism such that, for $ r $
small, $ \gamma_r = F_r ( \gamma ) $, and let $ X = (d/dr) F_r |_{r=0} $
be a vector field defined on $ \gamma $. Then, as in the proof of
Proposition \ref{p:Noeth}, we use Cartan's formula:
\[ \begin{split} 
 \frac { d} { d r} \int_{\gamma_r } \alpha \; \Big|_{r=0} 
& = \frac { d} { d r} \int_{\gamma } F_r^* \alpha \; \Big|_{r=0} 
 = \int_\gamma ( d \alpha ( X , \bullet ) + d ( \alpha ( X ) ) ) \\
& = \int_\gamma \widetilde \omega ( X , \bullet )  + \alpha( X )|_{ \partial 
\gamma } 
 = \int_\gamma \widetilde \omega ( X , \bullet )  
\,,
\end{split}\]
since by the assumptions on $ \gamma_r $, $ X \equiv 0 $ near 
$ \partial \gamma$. This means that 
\[  \frac { d} { d r} \int_{\gamma_r } \alpha \; \Big|_{r=0} = 0 
\ \Longrightarrow \ \widetilde \omega_{\gamma(s)} ( X_{\gamma(s)}  , \dot \gamma(s) ) = 0 \ \forall \; X \ \forall \; s \,, \]
which proves the proposition in finite dimensions.

The same formal argument applies to evolution equation and in our 
case we check it by a sandard direct computation:
\[ \begin{split} 
{\mathcal S } ( u + \delta u , t + \delta t ) & =
\int ( \omega ( u + \delta u , \dot u + \delta \dot u ) /2 - 
H ( u + \delta u ) ( \dot t + \delta \dot t ) ) ds 
\,.
\end{split} \]
Integrating by parts and neglecting higher order terms 
we obtain the first variation of
$ {\mathcal S } $:
\[ \begin{split} \delta {\mathcal S} 
&  = \int ( \omega ( \delta u , \dot u ) - \dot t d_u H  ( \delta u ) +
d_u H ( \dot u ) \delta t ) ds \\
& = \int \wo_{(u,t)} ( ( \delta u , \delta t ), ( \dot u , \dot t ) ) ds \,, 
\end{split} \]
and this vanishes for all $ \delta u $ and $ \delta t $ if and only 
if 
$ ( \dot u , \dot t ) \in \ker \wo_{(u,t)} $.
\end{proof}

\section{Effective dynamics}
\label{ed}

Suppose that $ \wM \subset \wV $ is a submanifold which 
is {\em presymplectic} in the sense that
\begin{equation}
\label{eq:const}   \dim \ker \wo \rest_{ \wM } = k \,, \ \ \text{where $ k $ is 
 constant on $ \wM $.} \end{equation}
Then $ \ker \wo \rest_{\wM } $ defines a foliation of $ \wM $ with 
leaves of dimension $ k $. We note that the fact that $ d \wo = 0 $
and the formula for $ d \rho ( X, Y , Z ) $,
\[  X \rho ( Y , Z ) - Y \rho ( X ,  Z) + 
Z \rho ( X , Y ) - \rho ( [ Y , Z ] , X ) + \rho ( [ X , Z ], Y ) 
- \rho ( [ X, Y ] , Z ) \,, \]
show that the $ \ker \wo $ satisfies the Frobenius integrability 
condition.

The method of collective coordinates for motion close to $ \widetilde M $
is based on the following principle:
\begin{quote}
Suppose that $ \gamma $ is critical for $ {\mathcal S} $ 
(for instance  $ t \mapsto u ( t ) $ which satisfies 
\eqref{eq:HL} or, 
equivalently, \eqref{eq:Hflow}). Suppose also that $ \gamma $
is {\em close} to $ \wM $. Then it is close to a fixed leaf of the
above foliation. 
\end{quote}

Here is a trivial example to illustrate this. Let $ V = T^* \RR $
and $ H ( x , \xi ) = \xi^2/2 $. Then suppose that $ \wM = \{ \xi = 0 \}$.
In that case $ \dim \ker \wo \rest_\wM $ is $ 3 $. If 
\[  \gamma ( t ) = (  ( x + t\epsilon  , \epsilon ), t ) \,, \]
then it is close $ \wM $, which the only leaf of the foliation. 

What one normally wants is (see \cite{OS} for examples from the 
physics literature and \cite{FrY} for an implicit application of this
principle in the mathematics literature):
\begin{quote}
Let $ \wM $ satisfy \eqref{eq:const} with $ k = 1 $. 
Suppose that $ \gamma $ is critical for $ {\mathcal S} $.
Suppose also that $ \gamma $
is {\em close} to $ \wM $. Then $ \gamma $ is close to a 
$ \gamma_{\wM} \subset \wM $ which is critical for 
$ {\mathcal S}_{\gamma_0 } $, $ \gamma_0 \subset \wM $.
In other words, we restrict the Lagrangian to the submanifold
and compute the action there. 
\end{quote}

The simplest case is given by $ \wM = M \times \RR $ with 
$ M $ symplectic, that is $ M$ for which $ \omega \rest_M $ is nondegenerate.
In that case the foliation is given by 
\[   s \rightarrow ( \exp ( s \Xi_{H\;\rest_{M}} ), s ) \,, \]
where $ \Xi_{p }$ is the Hamilton vector field of a Hamiltonian $ p $.
This is very clear in finite dimensions since then, locally, 
\[ M = { ( x , \xi ) : x''=\xi''=0 }  \,, \ \ x = ( x' , x'') \,, \ \
\xi = ( \xi' , \xi'') \,, \]
and 
\[ \wo \rest_{\wM } = ( d\xi' + H_{x'} ( x', 0 , \xi' , 0 ) dt ) \wedge
 ( dx' - H_{\xi'}  ( x', 0 , \xi' , 0 )  dt ) \,.\]

However we may have situations in which $ \omega \rest_M $ is degenerate
yet $ \ker \wo \rest_{M \times \RR } $ keeps fixed rank $ 1$. That means that the 
Hamiltonian formalism is not applicable but the Lagrangian one is.
Here is a simple example:
\begin{gather*} 
M = \{ ( x_1 , 0  , \xi_2^2 , \xi_2  ) \} \subset V = T^* \RR^2\,, 
\ \  H ( x, \xi) = x_1  \,, \\
 \omega \rest_M =  2 \xi_2 d \xi_2 \wedge dx_1  \,, \ \
\dim \ker  \omega \rest_{ M \cap \{\xi_2 = 0 \} } = 2 = \dim M \,, \\ 
\wo \rest_\wM = ( 2 \xi_2  d \xi_2 + dt )\wedge dx_1  \ \ 
\dim \ker \wo \rest_\wM = 1 \,. 
\end{gather*}

A complicated example from the physics literature comes from \cite[\S 5]{GHW}.
If one considers $ M \subset V = H^1 ( \RR , \CC ) $ given by 
all sums of time-modulated solitons and time modulated nonlinear ground states
(see \cite[(5.1)]{GHW}),
\begin{gather}
\label{eq:ghw}
\begin{gathered}
 u(x) = u_S (x; \eta, Z, V, \phi) + u_D (x; a, \phi, \psi) \,, \\
u_S ( x  ; \eta, Z, V , \phi) \defeq
\eta \,  \sech ( \eta x - Z) e^{ i Vx - i \phi} \,, \\
u_D (x; a, \phi, \psi) \defeq
 a \, \sech \left( a x + \tanh^{-1} \left(\frac \gamma a\right)\right) 
e^{ - i ( \phi + \psi ) } \,,
\end{gathered}
\end{gather}
then in the reduced six dimensional space described by $ ( \eta, Z, V , \phi, 
a, \psi ) $ is {\em not} symplectic with respect to $ \omega $ given by 
\eqref{eq:omega}. This can be checked by computing the determinant of
a matrix corresponding to $ \omega \rest_M $ -- see Fig.\ref{f:2}.
Despite that the method of collective coordinates is used by the 
authors in constructing an effective Lagrangian \cite[(5.4)]{GHW}
and it is then used to obtained approximate equations of motion 
\cite[(5.9)]{GHW}. It is numerically 
shown to give a good agreement with the solution of the equation.

\begin{figure}
\begin{center}
\includegraphics[width=5.5in]{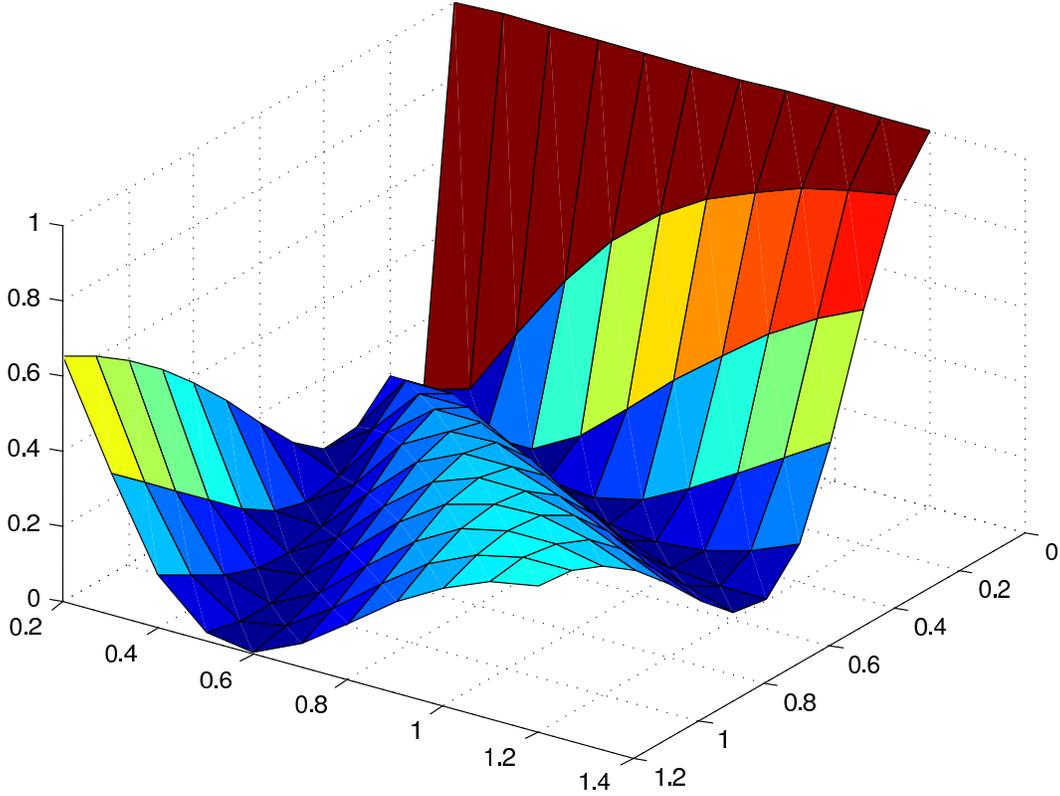}
\end{center}
\caption{The plot of $ \det ( \omega|_{M \cap \{ V = Z = 0 \}}  ) $ for $ \gamma = 0.1 $ (in the notation of \cite{GHW}), $ \phi = 0 $, $ \psi = \pi/4 $, 
$ 0 < \eta < 1.2 $ and $ 0.11 < a < 1.2$. The two lines along
which $ \omega|_{M \cap \{ V = Z = 0 \} } $ is degenerate are clearly visible.
The restriction to $ Z = V = 0 $ is not essential since these two variables
are essentially conjugate.}
\label{f:2}
\end{figure}

Finally we comment on the effective dynamics of solitons interacting
with slowly varying potentials. In \cite{HZ1} we followed \cite{FrSi}
and used a symplectic approach improving the results of \cite{FrSi}
and \cite{FrY} (same method apply to in that setting)
by obtaining equations of motion without errors and 
obtaining a better accuracy of approximation by a moving soliton ($ h \rightarrow h ^2 $, where $ h $ is slowness parameter of the potential). 
When attempting to reproduce these results using the Lagrangian
formalism we could obtain the same equations of motions (we later learned
that they were implicit in \cite{OS}), but could not obtain the 
$ h \rightarrow h^2 $ improvement. 

\medskip

\noindent 
{\sc Acknowledgments.}
We would like to thank Alan Weinstein for suggesting Souriau's monograph
as a source of geometric interpretation of Noether's theorem and Lagrangian
mechanics.
The work of the first author was supported in part by an NSF postdoctoral
fellowship, and that
of the second second author by the NSF grant DMS-0654436.

\end{document}